\documentclass[11pt]{amsart}

\theoremstyle{plain}
\newtheorem{thm}{Theorem}[section]
\newtheorem{cor}[thm]{Corollary}

\newcommand{\R}{\mathbb{R}}
\newcommand{\C}{\mathbb{C}}

\numberwithin{equation}{section}

\begin{document}

\title{The modulus of Whittaker functions}

\author{Hans Volkmer}
\address{
Department of Mathematical Sciences,\\ University of
Wisconsin--Milwaukee, P.~O.~Box 413,\\
Milwaukee, WI 53201, U.S.A.}

\begin{abstract}
The paper discusses some properties of the modulus $|W_{k,m}(z)|$ of the Whittaker function $W_{k,m}(z)$.
In particular, completely monotone functions expressed in terms of $|W_{k,m}(z)|$ are  found. The results follow from
an integral representation for products of Whittaker functions due to Erd\'elyi (1938).
\end{abstract}

\keywords{Whittaker functions; Bessel functions; completely monotone functions}
\subjclass[2010]{33C15, 33C10}
\maketitle

\section{Introduction}
Recently, Boris Belinskiy asked the author the following question: Consider the quotient of Whittaker functions
\begin{equation}\label{1:q}
\frac{W_{k,m}(\tau z)}{W_{k,m}(z)} ,
\end{equation}
where $k$ is purely imaginary, $m$ is real, $|\arg z|<\frac12\pi$, $\tau>1$. Show that the quotient \eqref{1:q} is bounded for fixed $z$ and $\tau$ as a function of $k,m$. This problem occurs in Belinskiy's work in scattering theory.
A straightforward approach to this problem is to investigate the asymptotic behavior of the Whittaker function $W_{i r,m}$ as the real variables $r,m$
tend to infinity. However, this approach faces some difficulties. It appears that the required asymptotic results are not available.
The handbook of mathematical functions \cite[Chapter 13]{NIST} does give asymptotic results where both $k$ and $m$ tend to infinity but they do not include imaginary $k$. Olver \cite[page 401]{O} has such results but only for fixed $m$. Moreover, it will be difficult to obtain explicit bounds for \eqref{1:q} by using asymptotic methods.

We show in this paper that the quotient \eqref{1:q} can be estimated in a very simple way using an integral representation for products of Whittaker functions
due to Erd\'elyi \cite{E}.
We also show that the function
\[  t\mapsto t^{-1}e^{t\Re x}\left|W_{k,m}(tx)\right|^2,\quad t>0 \]
is completely monotone for appropriate values of $k,m$ and $x$.

Erd\'elyi's formula is known for a long time. However, the conclusions we draw from it appear to be new.

\section{Erd\'elyi's integral formula}
Erd\'elyi \cite{E} used the convolution theorem for the Laplace transform to derive the following integral representation for products of
Whittaker functions.

\begin{thm}\label{2:t1}
Suppose $x,y\in\C$, $|\arg x|<\pi$, $|\arg y|<\pi$, $t>0$, $m\in\C$, $k,l\in\C$ with $\Re(1-k-l)>0$.
Then we have
\begin{eqnarray}
&& (t^2xy)^{-1/2} e^{\frac12t (x+y)} W_{k,m}(tx)W_{l,m}(ty)\nonumber\\
&=& \frac{(xy)^m}{\Gamma(1-k-l)} \int_0^\infty e^{-tu} (x+u)^{k-\frac12-m}(y+u)^{l-\frac12-m} u^{-k-l}\label{2:e1}\\
&&\times {}_2F_1\left(\tfrac12+m-k,\tfrac12+m-l;,1-k-l;\frac{u(x+y+u)}{(x+u)(y+u)}\right)\,du.\nonumber
\end{eqnarray}
\end{thm}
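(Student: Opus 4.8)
The plan is to realise both sides of \eqref{2:e1} as Laplace transforms in the variable $t$ and to compare the transformed functions directly. For the left side, start from the confluent hypergeometric representation of the Whittaker function, apply Kummer's transformation $U(a,b,z)=z^{1-b}U(a-b+1,2-b,z)$ to Tricomi's function so that $z^{-1/2}e^{z/2}W_{k,m}(z)=z^{-m}U(\tfrac12-m-k,1-2m,z)$, and then insert Euler's integral for $U$. This gives, for $\Re(m+k)<\tfrac12$ and $\Re z>0$,
\[
z^{-1/2}e^{z/2}W_{k,m}(z)=\frac{z^{-m}}{\Gamma(\tfrac12-m-k)}\int_0^\infty e^{-zt}\,t^{-1/2-m-k}(1+t)^{k-m-1/2}\,dt .
\]
Putting $z=tx$ and rescaling the integration variable (and likewise with $l$, $y$) one obtains $(tx)^{-1/2}e^{tx/2}W_{k,m}(tx)=t^{-m}x^{m}(\mathcal L A)(t)$ and $(ty)^{-1/2}e^{ty/2}W_{l,m}(ty)=t^{-m}y^{m}(\mathcal L B)(t)$, where $(\mathcal L f)(t)=\int_0^\infty e^{-tu}f(u)\,du$ and $A,B$ are explicit power functions on $(0,\infty)$.

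Next I would multiply these two identities and invoke the convolution theorem for the Laplace transform, so that the left side of \eqref{2:e1} becomes $(xy)^{m}t^{-2m}\bigl(\mathcal L(A*B)\bigr)(t)$. The factor $t^{-2m}$ prevents reading off the transformed function at once; for $\Re m>0$ I would absorb it by writing $t^{-2m}=\bigl(\mathcal L[\tau\mapsto\tau^{2m-1}/\Gamma(2m)]\bigr)(t)$, which turns the left side into $(xy)^{m}(\mathcal L h)(t)$ with $h=\tau^{2m-1}/\Gamma(2m)*A*B$ (a Riemann--Liouville fractional integral of $A*B$). It then suffices to identify $h(u)$ with the integrand of \eqref{2:e1} divided by $(xy)^{m}$.

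Now $h(u)$ is a two-dimensional integral over a simplex. Integrating out one variable with Euler's Beta integral — the relevant ${}_2F_1$ has coinciding first and last parameters and hence collapses to a power via ${}_2F_1(a,b;a;z)=(1-z)^{-b}$ — reduces $h(u)$ to a single integral of the form $\int_0^u\sigma^{\alpha}(u-\sigma)^{\beta}(x+\sigma)^{\gamma}(y+u-\sigma)^{\delta}\,d\sigma$ with $\alpha+\beta+\gamma+\delta=-2$ and with the two ``moving'' linear factors satisfying $(x+\sigma)+(y+u-\sigma)=x+y+u$. After $\sigma=ut$ this is an instance of Appell's function $F_1(a;b_1,b_2;c;\cdot,\cdot)$, and the exponent identity $\gamma+\delta=k+l-1$ forces $c=b_1+b_2=1-k-l$; the classical reduction $F_1(a;b_1,b_2;b_1+b_2;x,y)=(1-y)^{-a}\,{}_2F_1\bigl(a,b_1;b_1+b_2;(x-y)/(1-y)\bigr)$ then collapses it to a single Gauss function. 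A Pfaff transformation followed by an Euler transformation bring this ${}_2F_1$ to the argument $u(x+y+u)/\bigl((x+u)(y+u)\bigr)$ and the parameters $\tfrac12+m-k,\ \tfrac12+m-l;\ 1-k-l$ of \eqref{2:e1}, the residual power factors combining into $(x+u)^{k-1/2-m}(y+u)^{l-1/2-m}u^{-k-l}$. This hypergeometric reduction — recognising the two-variable integral as an $F_1$ that degenerates to a ${}_2F_1$, and carrying all the fractional powers through the transformations — is the step I expect to be the main obstacle.

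Finally I would lift the temporary restrictions. The argument above is valid on the nonempty open set $\Re m>0$, $\Re(m+k)<\tfrac12$, $\Re(m+l)<\tfrac12$, $\Re x>0$, $\Re y>0$, which is contained in the domain of the theorem. Both sides of \eqref{2:e1} are analytic in $(k,l,m,x,y)$ throughout that domain: for the right side one checks that the integral converges and depends analytically there, the only delicate points being $u\to0^{+}$, where the integrand is $O(u^{-k-l})$ and is integrable precisely because $\Re(1-k-l)>0$, and $u\to\infty$, where the argument of the ${}_2F_1$ tends to the singular value $1$ but the factor $e^{-tu}$ with $t>0$ overwhelms the resulting algebraic growth. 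Hence the two sides agree on an open set, and by the identity theorem on the whole stated domain.
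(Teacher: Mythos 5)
The paper offers no proof of this theorem: it is quoted from Erd\'elyi (1938) (and from Buchholz for the version with the parameter $t$), with the remark that Erd\'elyi obtained it from the convolution theorem for the Laplace transform. Your proposal is a correct, self-contained reconstruction of exactly that argument --- writing each factor $z^{-1/2}e^{z/2}W_{k,m}(z)$ as $z^{-m}$ times a Laplace transform via the Euler integral for Tricomi's $U$, convolving, absorbing $t^{-2m}$ as a fractional integral, collapsing the resulting simplex integral through a degenerate Appell $F_1$, and finishing by analytic continuation --- and I checked that the Beta-integral collapse, the $F_1$ reduction with $c=b_1+b_2=1-k-l$, and the Pfaff transformation do yield precisely the parameters, the argument $u(x+y+u)/\bigl((x+u)(y+u)\bigr)$, and the power factors of \eqref{2:e1}, so this matches the approach the paper attributes to its source.
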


Erd\'elyi \cite[(1) on page 873]{E} has \eqref{2:e1} with $t=1$. Buchholz \cite[(10) on page 89]{B} has \eqref{2:e1} including $t$.
It is easy to see that we can obtain the formula with $t$ from the special case $t=1$. For our purpose, we need \eqref{2:e1} including~$t$.
The function ${}_2F_1$ is the hypergeometric function. If $u=0$ we choose the principal value of ${}_2F_1$ and analytically continue this branch along the
path of integration. The powers appearing in \eqref{2:e1} are assigned their principal values.

If $m$ is real, $y=\bar x$, $l=\bar k$, we obtain the following result as a special case.
\begin{cor}\label{2:c}
Suppose $x\in\C$, $|\arg x|<\pi$,
$t>0$, $m\in\R$, $k\in\C$, $\Re k<\frac12$.
Then we have
\begin{eqnarray}
&& (t|x|)^{-1} e^{t\Re x} |W_{k,m}(tx)|^2\nonumber\\
&=& \frac{|x|^{2m}}{\Gamma(1-2\Re k)} \int_0^\infty e^{-tu} \left|(x+u)^{k-\frac12-m}\right|^2 u^{-2\Re k}\label{2:e2}\\
&&\times {}_2F_1\left(\tfrac12+m-k,\tfrac12+m-\bar k;,1-2\Re k;\frac{u(2\Re x+u)}{|x+u|^2}\right)\,du.\nonumber
\end{eqnarray}
\end{cor}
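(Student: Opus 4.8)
The plan is to derive \eqref{2:e2} from Theorem~\ref{2:t1} by the specialization $y=\bar x$, $l=\bar k$, $m\in\R$, and then to simplify each factor on the two sides of \eqref{2:e1}. Two elementary observations drive the computation. The first is the reflection symmetry
\[ \overline{W_{k,m}(z)}=W_{\bar k,\bar m}(\bar z),\qquad |\arg z|<\pi, \]
which holds because $W_{k,m}(z)$ is jointly holomorphic in $(k,m,z)$ on $\C^2\times\{|\arg z|<\pi\}$ and is real-valued when $k,m\in\R$ and $z>0$; the identity theorem applied to the difference of $W_{k,m}(z)$ and $\overline{W_{\bar k,\bar m}(\bar z)}$ then gives it. With $m\in\R$ this reads $\overline{W_{k,m}(z)}=W_{\bar k,m}(\bar z)$, so the product on the left of \eqref{2:e1} becomes $W_{k,m}(tx)\,\overline{W_{k,m}(tx)}=|W_{k,m}(tx)|^2$. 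The second observation is that $\overline{w^s}=(\bar w)^{\bar s}$ for principal branches whenever $|\arg w|<\pi$; this applies to $w=x+u$, which is never a non-positive real because $|\arg x|<\pi$ and $u>0$.

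With these in hand one simply reads off the remaining substitutions. On the left of \eqref{2:e1}, $(t^2xy)^{-1/2}=(t^2|x|^2)^{-1/2}=(t|x|)^{-1}$ and $e^{\frac12t(x+y)}=e^{t\Re x}$ since $t>0$. On the right, the condition $\Re(1-k-l)>0$ becomes $1-2\Re k>0$, i.e.\ $\Re k<\tfrac12$, and $\Gamma(1-k-l)=\Gamma(1-2\Re k)$; the prefactor $(xy)^m=|x|^{2m}$ since $m\in\R$ and $|x|^2>0$; the two powers combine, by the second observation, into $(x+u)^{k-\frac12-m}\,\overline{(x+u)^{k-\frac12-m}}=\bigl|(x+u)^{k-\frac12-m}\bigr|^2$; the factor $u^{-k-l}=u^{-2\Re k}$ since $u>0$; and the argument of the hypergeometric function becomes
\[ \frac{u(x+y+u)}{(x+u)(y+u)}=\frac{u(2\Re x+u)}{|x+u|^2}, \]
while its numerator parameters become the conjugate pair $\tfrac12+m-k,\ \tfrac12+m-\bar k$ and its lower parameter becomes $1-2\Re k$. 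Collecting these in \eqref{2:e1} produces exactly \eqref{2:e2}.

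The argument is essentially bookkeeping, so there is no real obstacle; the one point worth checking is the branch of ${}_2F_1$. In \eqref{2:e1} it is the principal branch at $u=0$, continued along the path of integration, and after the specialization its argument $u(x+y+u)/\bigl((x+u)(y+u)\bigr)=u(2\Re x+u)/|x+u|^2$ is real and strictly less than $1$ for every $u>0$ --- indeed the denominator exceeds the numerator by exactly $|x|^2>0$. Hence this argument never leaves $(-\infty,1)$, the continuation along the path is just the principal branch evaluated on a real segment, it is real-valued, and it coincides with the prescription for \eqref{2:e2}. Finally, the hypotheses of Theorem~\ref{2:t1} are met under the assumptions of the corollary, since $|\arg\bar x|=|\arg x|<\pi$ and $\Re(1-k-\bar k)=1-2\Re k>0$.
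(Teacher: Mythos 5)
Your proposal is correct and follows exactly the route the paper intends: the paper derives Corollary \ref{2:c} simply by setting $y=\bar x$, $l=\bar k$ with $m\in\R$ in Theorem \ref{2:t1}, and your write-up supplies the same specialization together with the bookkeeping (the reflection identity $\overline{W_{k,m}(z)}=W_{\bar k,\bar m}(\bar z)$, the conjugation of principal powers, and the observation that $|x+u|^2-u(2\Re x+u)=|x|^2>0$ keeps the hypergeometric argument below $1$) that the paper leaves implicit.
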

The argument $\frac{u(2\Re x+u)}{|x+u|^2}$ of the hypergeometric function in \eqref{2:e2} is real and less than $1$.

\section{Consequences}
We use Corollary \ref{2:c} to estimate \eqref{1:q}.

\begin{thm}\label{3:t1}
Suppose $x\in\C$, $|\arg x|\le \frac12\pi$, $m\in\R$, $k\in\C$, $\Re k<\frac12$ and $\tau\ge 1$.
Then
\begin{equation}\label{3:q}
 \left|\frac{W_{k,m}(\tau x)}{W_{k,m}(x)}\right|^2\le \tau e^{(1-\tau)\Re x} .
\end{equation}
\end{thm}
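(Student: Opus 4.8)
The plan is to compare the integral representation \eqref{2:e2} at the two arguments $\tau x$ and $x$. Writing $\varphi(t)$ for the left-hand side of \eqref{2:e2} — that is, $\varphi(t)=(t|x|)^{-1}e^{t\Re x}|W_{k,m}(tx)|^2$ — the desired inequality \eqref{3:q} is, after unwinding the definitions, equivalent to
\[
  \tau\,|x|^2\,\varphi(\tau) \le \tau\, e^{(1-\tau)\Re x}\cdot |x|^2\,\varphi(1),
\]
which simplifies to the clean statement $\varphi(\tau)\le\varphi(1)$ for all $\tau\ge 1$; in other words, it suffices to show that $\varphi$ is \emph{nonincreasing} on $[1,\infty)$, and in fact it is enough to prove $\varphi$ is nonincreasing on all of $(0,\infty)$.

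First I would record that in \eqref{2:e2} the measure $d\mu(u)$ built from $|x|^{2m}\Gamma(1-2\Re k)^{-1}\,|(x+u)^{k-1/2-m}|^2\,u^{-2\Re k}\,{}_2F_1(\dots)\,du$ is a \emph{nonnegative} measure on $(0,\infty)$: the powers are manifestly positive, $\Gamma(1-2\Re k)>0$ since $\Re k<\tfrac12$, and — this is the one genuine input — the hypergeometric factor is nonnegative. For the latter, note (as the paper observes just after the corollary) that the argument $s=u(2\Re x+u)/|x+u|^2$ is real and $<1$; when $|\arg x|\le\tfrac12\pi$ we have $\Re x\ge 0$, so $s\ge 0$ as well, i.e.\ $s\in[0,1)$, and the series ${}_2F_1(\tfrac12+m-k,\tfrac12+m-\bar k;1-2\Re k;s)=\sum_n \frac{|(\tfrac12+m-k)_n|^2}{(1-2\Re k)_n\,n!}s^n$ has all nonnegative coefficients (using $(1-2\Re k)_n>0$), hence is $\ge 0$. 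Thus \eqref{2:e2} exhibits $\varphi(t)=\int_0^\infty e^{-tu}\,d\mu(u)$ as the Laplace transform of a nonnegative measure.

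Consequently $\varphi$ is completely monotone on $(0,\infty)$ — in particular $\varphi'(t)=-\int_0^\infty u\,e^{-tu}\,d\mu(u)\le 0$ — so $\varphi$ is nonincreasing, giving $\varphi(\tau)\le\varphi(1)$ for $\tau\ge1$ and hence \eqref{3:q}. I expect the only point needing care to be the sign of the hypergeometric factor: one must use the hypothesis $|\arg x|\le\tfrac12\pi$ (not merely $|\arg x|<\pi$ as in the corollary) precisely to force $s\ge0$, since for $\Re x<0$ the argument $s$ could be negative and the alternating-type series would no longer obviously be nonnegative. Everything else is bookkeeping: checking that the prefactors $(t|x|)^{-1}$ and $e^{t\Re x}$ cancel correctly between the two evaluations so that the stated bound reduces exactly to monotonicity of the Laplace transform $\varphi$.
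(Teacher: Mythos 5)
Your proposal is correct and follows essentially the same route as the paper: both use Corollary \ref{2:c}, observe that for $\Re x\ge 0$ the hypergeometric argument lies in $[0,1)$ and the series has nonnegative coefficients, conclude that the left-hand side of \eqref{2:e2} is nonincreasing in $t$, and compare $t=\tau$ with $t=1$. Your added detail on why the coefficients are nonnegative (the conjugate-pair numerator parameters giving $|(\tfrac12+m-k)_n|^2$ and $(1-2\Re k)_n>0$) is a correct elaboration of what the paper states without proof.
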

\begin{proof}
Since $\Re x\ge 0$, $\frac{u(2\Re x+u)}{|x+u|^2}$ lies between $0$ and $1$. Also, the hypergeometric power series appearing in \eqref{2:c} has
nonnegative coefficients. Therefore, the right-hand side of \eqref{2:c} is a decreasing function of $t$, and we obtain, for
$t=\tau\ge 1$,
\[ (\tau |x|)^{-1} e^{\tau\Re x} |W_{k,m}(\tau x)|^2\le |x|^{-1} e^{\Re x} |W_{k,m}(x)|^2 .\]
This implies \eqref{3:q}
\end{proof}

It is important that the bound on the right-hand side of \eqref{3:q} is independent of $k$ and $m$ in order to answer the question raised in the introduction.

Consider the hypergeometric function
\[  F(z)= {}_2F_1\left(\tfrac12+m-k,\tfrac12+m-\bar k;1-2\Re k;z\right) \]
under the assumptions on $k,m$ from Corollary \ref{2:c}.
Then $F(z)>0$ for $z\in[0,1)$. If $F(z)$ has a zero $z<0$ then let $p=p(k,m)$ denote the largest negative zero of $F(z)$. If there is no negative zero, set $p=-\infty$.

\begin{thm}\label{3:t2}
Suppose that $k\in\C$, $\Re k<\frac12$, $m\in\R$, and $x\in\C$ such that $|\arg x|<\pi$ if $p=-\infty$, and otherwise $|\arg x| \le \theta$, where $\theta$ is the angle between $\frac12\pi$ and $\pi$ for which
\[\tan\theta =-\frac{1}{(-p(k,m))^{1/2}}.\]
Then the function
\begin{equation}\label{3:f}
 f(t)=t^{-1} e^{t\Re x}|W_{k,m}(tx)|^2
\end{equation}
is (strictly) completely monotone on $(0,\infty)$, that is $(-1)^n f^{(n)}(t)>0$ for all $t>0$ and $n=0,1,2,\dots$
\end{thm}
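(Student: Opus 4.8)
The plan is to read off the complete monotonicity of $f$ directly from Erd\'elyi's formula. Multiplying \eqref{2:e2} by $|x|$ displays $f$ as a Laplace transform,
\[ f(t)=\int_0^\infty e^{-tu}g(u)\,du,\qquad g(u)=\frac{|x|^{2m+1}}{\Gamma(1-2\Re k)}\,\bigl|(x+u)^{k-\frac12-m}\bigr|^2\,u^{-2\Re k}\,F\Bigl(\tfrac{u(2\Re x+u)}{|x+u|^2}\Bigr). \]
If $g(u)\ge 0$ for all $u>0$, then differentiating under the integral sign — legitimate because on every compact subinterval of $(0,\infty)$ the function $u\mapsto u^ne^{-tu}g(u)$ is dominated by a fixed integrable function — gives $(-1)^nf^{(n)}(t)=\int_0^\infty u^ne^{-tu}g(u)\,du\ge 0$ for all $n\ge 0$ and $t>0$; and since $F(0)=1$ forces $g(u)>0$ for small $u>0$, these inequalities are in fact strict. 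Thus the whole theorem reduces to proving $g\ge 0$ on $(0,\infty)$.

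In the product defining $g$ the factors $|x|^{2m+1}$, $1/\Gamma(1-2\Re k)$ (here $1-2\Re k>0$), $\bigl|(x+u)^{k-\frac12-m}\bigr|^2$ and $u^{-2\Re k}$ are all strictly positive for $u>0$ — note that $x+u\ne 0$ since $|\arg x|<\pi$ — so the sign of $g(u)$ is the sign of $F\bigl(z(u)\bigr)$, where $z(u)=u(2\Re x+u)/|x+u|^2$. The next step is to determine the range of $z(u)$ as $u$ runs over $(0,\infty)$. Writing $x=|x|e^{i\phi}$ with $|\phi|<\pi$ and substituting $s=u/|x|>0$, one finds $z=w/(w+1)$ with $w=s^2+2s\cos\phi=(s+\cos\phi)^2-\cos^2\phi$. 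The map $w\mapsto w/(w+1)$ is increasing for $w>-1$; since $w>0$ when $\cos\phi\ge 0$ and $w\ge -\cos^2\phi>-1$ when $\cos\phi<0$, it follows that the range of $z(u)$ is contained in $[0,1)$ if $\cos\phi\ge 0$ and equals $[-\cot^2\phi,1)$ if $\cos\phi<0$, the minimum $-\cot^2\phi$ being attained at $s=-\cos\phi$.

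It remains to compare this range with the set on which $F$ is positive. Recall that $F$ is analytic on $(-\infty,1)$ and positive on $[0,1)$. If $p=-\infty$, then $F$ has no zero in $(-\infty,1)$ and hence $F>0$ there, so $g>0$ for every $x$ with $|\arg x|<\pi$. If $p>-\infty$, then $F>0$ on $(p,1)$, so it is enough that the range of $z(u)$ be contained in $[p,1)$; this is automatic when $\cos\phi\ge 0$ (then $|\arg x|\le\tfrac12\pi\le\theta$), and when $\cos\phi<0$ it amounts to $-\cot^2\phi\ge p$, i.e.\ $\cot^2\phi\le -p$. Because $\tan\theta=-(-p)^{-1/2}$ with $\theta\in(\tfrac12\pi,\pi)$ and $\tan$ is increasing on $(\tfrac12\pi,\pi)$, the condition $\cot^2\phi\le -p$ is equivalent to $|\arg x|\le\theta$, which is exactly the hypothesis. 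Hence $g\ge 0$ on $(0,\infty)$, and $f$ is strictly completely monotone. The step I expect to be the crux is the middle one: noticing that the substitution $w=s^2+2s\cos\phi$ turns the argument of the hypergeometric function into the monotone map $w\mapsto w/(w+1)$, which is what makes the extremal value $-\cot^2\phi$ of $z(u)$ — and therefore the critical angle $\theta$ — appear; everything else is routine.
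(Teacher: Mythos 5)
Your proof is correct and follows essentially the same route as the paper: read off $f$ as the Laplace transform of a nonnegative function from Corollary \ref{2:c}, and verify nonnegativity of the integrand by showing the minimum of $u\mapsto u(2\Re x+u)/|x+u|^2$ is $-\cot^2(\arg x)=-(\Re x/\Im x)^2$ (attained at $u=-\Re x$) and lies in $[p(k,m),1)$ exactly under the stated angle condition. Your version just spells out more of the routine details (the substitution $z=w/(w+1)$, the justification of differentiation under the integral, and the strictness via $F(0)=1$) that the paper leaves implicit.
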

\begin{proof}
If $|\arg x|\le \frac12\pi$ then \eqref{2:e2} shows that $f(t)$ is the Laplace transform of a positive function, so the statement follows.
If $\frac12\pi <|\arg x|<\pi$, then the function
\[ u\mapsto \frac{u(2\Re x+u)}{|x+u|^2},\quad u\ge 0, \]
attains an absolute minimum at $u=-\Re x$ with value $-\left(\frac{\Re x}{\Im x}\right)^2$. Under the assumption $|\arg x|\le \theta$, this minimum value is
greater than or equal to $p(k,m)$. Therefore, $f(t)$ is still the Laplace transform of a positive function (which has one zero when $|\arg x|=\theta$.)
\end{proof}

We should mention that the condition $|\arg x|\le \theta$ is sharp. If this condition is not satisfied (with $|\arg x|<\pi$) then $f(t)$ is not completely monotone because of Bernstein's theorem \cite[Thm 12b, page 161]{W}.
As a consequence of Theorem \ref{3:t2} we obtain that the function $W_{k,m}(x)$ has no zeros in the sector $|\arg x|\le \theta$.
It is clear that Theorem \ref{3:t1} could also be extended to the sector $|\arg x|\le \theta$.

We identify a case when $p(k,m)=-\infty$.

\begin{thm}\label{3:t3}
Suppose that $k<\frac12$,
\[ k-\frac12\le m \le\frac12-k \]
and $x\in\C$, $|\arg x|<\pi$. Then the function
\eqref{3:f} is (strictly) completely monotone on $(0,\infty)$.
\end{thm}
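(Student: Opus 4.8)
The plan is to reduce everything to Theorem~\ref{3:t2}. That theorem allows $x$ to range over the full sector $|\arg x|<\pi$ precisely when $p(k,m)=-\infty$, so it suffices to prove that under the present hypotheses the hypergeometric function
\[ F(z)={}_2F_1\bigl(\tfrac12+m-k,\tfrac12+m-\bar k;1-2\Re k;z\bigr) \]
has no zero on the negative real axis. Since $k$ is now real we have $\bar k=k$, and writing $a=\tfrac12+m-k$, $c=1-2k$ we get $F(z)={}_2F_1(a,a;c;z)$. The hypothesis $k<\tfrac12$ gives $c>0$, while the two inequalities $k-\tfrac12\le m\le\tfrac12-k$ translate exactly into $a\ge0$ and $c-a=\tfrac12-k-m\ge0$; thus $0\le a\le c$ and $c>0$.

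Next I would apply Pfaff's transformation
\[ F(z)=(1-z)^{-a}\,{}_2F_1\Bigl(a,c-a;c;\frac{z}{z-1}\Bigr), \]
which is valid here because $c\notin\{0,-1,-2,\dots\}$. For $z<0$ one checks that $w:=z/(z-1)$ lies in $(0,1)$. Since $a\ge0$, $c-a\ge0$ and $c>0$, every coefficient $(a)_n(c-a)_n/\bigl((c)_n\,n!\bigr)$ of the series ${}_2F_1(a,c-a;c;w)$ is nonnegative and the constant term equals $1$, so ${}_2F_1(a,c-a;c;w)\ge1>0$. As $(1-z)^{-a}>0$ for $z<0$, we conclude $F(z)>0$ for all $z<0$. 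Combined with the fact already noted after Corollary~\ref{2:c} that $F(z)>0$ on $[0,1)$, this shows $F$ has no negative zero, i.e.\ $p(k,m)=-\infty$, and Theorem~\ref{3:t2} yields the complete monotonicity of $f$ for every $x$ with $|\arg x|<\pi$.

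I do not expect a genuine obstacle; the only points needing a little care are the degenerate parameter values $a=0$ and $a=c$, where the Pfaff-transformed series collapses to the constant $1$ so the argument still goes through, and the elementary verification that $z\mapsto z/(z-1)$ maps $(-\infty,0)$ into $(0,1)$. Alternatively one can bypass the Pfaff transformation and argue directly from Euler's integral $F(z)=\frac{\Gamma(c)}{\Gamma(a)\Gamma(c-a)}\int_0^1 t^{a-1}(1-t)^{c-a-1}(1-zt)^{-a}\,dt$ when $0<a<c$, the integrand being strictly positive for $z<0$, and treating the boundary cases $a\in\{0,c\}$ separately as above.
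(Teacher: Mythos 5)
Your proposal is correct, and it reaches the paper's conclusion by the same reduction: show that $p(k,m)=-\infty$ under the stated parameter restrictions and then invoke Theorem~\ref{3:t2} with the full sector $|\arg x|<\pi$. Your translation of the hypotheses into $0\le a\le c$, $c>0$ for $F={}_2F_1(a,a;c;\cdot)$ is exactly what the paper uses implicitly. Where you differ is in how positivity of $F$ on the negative real axis is established. The paper quotes Euler's integral representation \cite[15.6.1]{NIST}, which gives ${}_2F_1(a,b;c;z)>0$ for $c>b>0$ and all $z<1$ at once (the integrand $t^{b-1}(1-t)^{c-b-1}(1-zt)^{-a}$ being positive there), with the degenerate cases $b=0$ and $b=c$ noted separately. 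Your main argument instead routes through Pfaff's transformation, sending $z<0$ to $w=z/(z-1)\in(0,1)$, where the transformed series ${}_2F_1(a,c-a;c;w)$ has nonnegative coefficients and constant term $1$. This is marginally more elementary (no integral representation, only nonnegativity of Taylor coefficients and convergence for $|w|<1$), though it yields positivity only on $(-\infty,0)$ rather than on all of $(-\infty,1)$ --- which is all that is needed to conclude $p(k,m)=-\infty$. The alternative you sketch in your final sentence, via Euler's integral with the boundary cases $a\in\{0,c\}$ treated separately, is precisely the proof given in the paper.
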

\begin{proof}
The integral representation \cite[15.6.1]{NIST} shows that ${}_2F_1(a,b;c;z)>0$ when $a,b,c\in\R$, $c>b>0$ and $z<1$.
This remains true if either $b=0$ or $b=c$. It follows that Theorem \ref{3:t2} applies with $p(k,m)=-\infty$.
\end{proof}

It is of interest to look at these results for the special case of the modified Bessel function $K_\nu$ connected to Whittaker functions
by
\[ K_\nu(x)=\left(\frac{\pi}{2x}\right)^{1/2} W_{0,\nu}(2x).\]

\begin{cor}\label{3:cor0}
Suppose $x\in\C$, $|\arg x|<\pi$, $t>0$, $\nu\in\R$. Then 
\begin{equation}
e^{2t\Re x}|K_\nu(tx)|^2=\pi(2|x|)^{2\nu} \int_0^\infty e^{-tu}|2x+u|^{-2\nu-1}{}_2F_1\left(\nu+\tfrac12,\nu+\tfrac12;1;\frac{u(4\Re x+u)}{|2x+u|^2}\right)du .
\end{equation}
\end{cor}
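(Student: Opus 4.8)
The plan is to obtain this corollary as a direct specialization of Corollary \ref{2:c}. In \eqref{2:e2} I would set $k=0$ and $m=\nu$ and simultaneously replace $x$ by $2x$, keeping the variable $t$; this is admissible since $\Re k=0<\frac12$ and $|\arg(2x)|=|\arg x|<\pi$, so all hypotheses of Corollary \ref{2:c} are met. With $k=0$ one has $\Gamma(1-2\Re k)=\Gamma(1)=1$ and $u^{-2\Re k}=1$, the two upper parameters of the hypergeometric function both become $\nu+\frac12$ with lower parameter $1$, and its argument becomes $\frac{u(4\Re x+u)}{|2x+u|^2}$. Moreover $2x+u\notin(-\infty,0]$ for every $u\ge 0$ (if $\arg x\ne 0$ the point $2x+u$ is non-real, and if $x>0$ then $2x+u>0$), so the principal branch gives $\bigl|(2x+u)^{-\frac12-\nu}\bigr|^2=|2x+u|^{-2\nu-1}$. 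Hence the right-hand side of \eqref{2:e2} reduces to
\[ (2|x|)^{2\nu}\int_0^\infty e^{-tu}\,|2x+u|^{-2\nu-1}\,{}_2F_1\!\left(\nu+\tfrac12,\nu+\tfrac12;1;\tfrac{u(4\Re x+u)}{|2x+u|^2}\right)du. \]

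For the left-hand side I would use the relation connecting $K_\nu$ and $W_{0,\nu}$ stated above. Writing $K_\nu(x)=(\pi/(2x))^{1/2}W_{0,\nu}(2x)$ with $x$ replaced by $tx$ gives $W_{0,\nu}(2tx)=(2tx/\pi)^{1/2}K_\nu(tx)$, so that $|W_{0,\nu}(2tx)|^2=(2t|x|/\pi)\,|K_\nu(tx)|^2$. Substituting this into the left-hand side of \eqref{2:e2} (after the replacement $x\mapsto 2x$) gives
\[ (t|2x|)^{-1}e^{t\Re(2x)}|W_{0,\nu}(2tx)|^2=\frac{1}{2t|x|}\,e^{2t\Re x}\cdot\frac{2t|x|}{\pi}\,|K_\nu(tx)|^2=\frac{1}{\pi}\,e^{2t\Re x}|K_\nu(tx)|^2. \]
Equating this with the simplified right-hand side above and multiplying through by $\pi$ produces exactly the asserted identity.

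Since the whole argument is a substitution into a formula already proved, there is no genuine obstacle; the one place that demands care is the consistent tracking of the scale factor: the $2$ inside the Whittaker argument and the prefactor $(2z/\pi)^{1/2}$ in the Whittaker--Bessel relation must be combined so that the exponential $e^{2t\Re x}$, the weight $(2|x|)^{2\nu}$, and the overall constant $\pi$ come out correctly.
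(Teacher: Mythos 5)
Your proposal is correct and is exactly the intended derivation: the paper states this corollary without proof as the specialization $k=0$, $m=\nu$, $x\mapsto 2x$ of Corollary \ref{2:c} combined with the relation $K_\nu(x)=(\pi/(2x))^{1/2}W_{0,\nu}(2x)$, and your bookkeeping of the factors $2t|x|$ and $\pi$ checks out.
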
 

\begin{cor}\label{3:c1}
Suppose $x\in\C$, $|\arg x|\le \frac12\pi$, $\nu \in\R$ and $\tau\ge 1$.
Then
\begin{equation}\label{3:q2}
 \left|\frac{K_\nu(\tau x)}{K_\nu(x)}\right|^2\le e^{(1-\tau)2\Re x} .
\end{equation}
\end{cor}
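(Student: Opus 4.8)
The plan is to deduce \eqref{3:q2} directly from Theorem~\ref{3:t1} using the Whittaker--Bessel connection $K_\nu(x)=(\pi/(2x))^{1/2}W_{0,\nu}(2x)$, valid for $|\arg x|<\pi$. First I would form the quotient of the two relations. Since $\tau>0$ is real we have $(2\tau x)^{1/2}=\tau^{1/2}(2x)^{1/2}$, so the square-root prefactors combine cleanly and
\[ \frac{K_\nu(\tau x)}{K_\nu(x)}=\tau^{-1/2}\,\frac{W_{0,\nu}(2\tau x)}{W_{0,\nu}(2x)} . \]

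Next I would apply Theorem~\ref{3:t1} with $k=0$ (so that $\Re k=0<\tfrac12$), $m=\nu\in\R$, the argument $x$ replaced by $2x$ (note $|\arg(2x)|=|\arg x|\le\tfrac12\pi$), and the given $\tau\ge1$. This yields
\[ \left|\frac{W_{0,\nu}(2\tau x)}{W_{0,\nu}(2x)}\right|^2\le \tau\,e^{(1-\tau)\,2\Re x}. \]
Combining the two displays — squaring the first identity and inserting the bound — the factor $\tau^{-1}$ cancels the $\tau$, leaving
\[ \left|\frac{K_\nu(\tau x)}{K_\nu(x)}\right|^2=\tau^{-1}\left|\frac{W_{0,\nu}(2\tau x)}{W_{0,\nu}(2x)}\right|^2\le e^{(1-\tau)2\Re x}, \]
which is \eqref{3:q2}.

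There is essentially no obstacle here; the only point worth flagging is that the $\tau^{-1/2}$ furnished by the Bessel--Whittaker relation exactly cancels the $\tau^{1/2}$ loss present in the general Whittaker estimate, which is why the bound for $K_\nu$ is sharper (no spurious $\tau$ factor) than \eqref{3:q}. Alternatively, one may argue directly from Corollary~\ref{3:cor0} in the same manner that Theorem~\ref{3:t1} was obtained from Corollary~\ref{2:c}: when $\Re x\ge0$ the argument of the ${}_2F_1$ lies in $[0,1)$ and the power series $\sum_n \frac{((\nu+\frac12)_n)^2}{(n!)^2}z^n$ has nonnegative coefficients, so $e^{2t\Re x}|K_\nu(tx)|^2$ is the Laplace transform (in $t$) of a positive function of $u$ and hence a decreasing function of $t$; evaluating at $t=\tau$ and at $t=1$ gives \eqref{3:q2} immediately.
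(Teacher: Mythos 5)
Your proof is correct and follows essentially the same route as the paper, which presents Corollary~\ref{3:c1} as an immediate specialization of Theorem~\ref{3:t1} with $k=0$, $m=\nu$, and argument $2x$ via the relation $K_\nu(x)=(\pi/(2x))^{1/2}W_{0,\nu}(2x)$, the $\tau^{1/2}$ factors cancelling exactly as you observe. Your alternative argument via Corollary~\ref{3:cor0} is also valid and mirrors how the paper derives Theorem~\ref{3:t1} from Corollary~\ref{2:c}.
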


For $\nu\in\R$, let $z=p(\nu)$ denote the largest negative zero of the hypergeometric function
\[ {}_2F_1(\nu+\tfrac12,\nu+\tfrac12;1;z) \]
if it exists; otherwise set $p(\nu)=-\infty$.

\begin{cor}\label{3:c2}
Suppose that $\nu\in\R$, $x\in\C$ such that $|\arg x|<\pi$ if $p(\nu)=-\infty$, and otherwise $|\arg x|\le \theta$, where $\theta$ is the angle between $\frac12\pi$ and $\pi$ for which
\[\tan\theta =-\frac{1}{(-p(\nu))^{1/2}}.\]
Then the function
\begin{equation}\label{3:f2}
 f(t)=e^{2t\Re x}|K_\nu(tx)|^2
 \end{equation}
is (strictly) completely monotone on $(0,\infty)$.
\end{cor}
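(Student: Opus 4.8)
The plan is to deduce this from Theorem~\ref{3:t2} by means of the connection formula $K_\nu(x)=(\pi/(2x))^{1/2}W_{0,\nu}(2x)$, so that no fresh integral estimate is required. First I would substitute $x\mapsto tx$ in this formula and take squared moduli, which gives $|K_\nu(tx)|^2=\tfrac{\pi}{2t|x|}\,|W_{0,\nu}(2tx)|^2$, hence
\[ f(t)=e^{2t\Re x}|K_\nu(tx)|^2=\frac{\pi}{2|x|}\cdot t^{-1}e^{t\Re(2x)}\bigl|W_{0,\nu}(t\cdot 2x)\bigr|^2 .\]
The second factor is exactly the function \eqref{3:f} of Theorem~\ref{3:t2} with $k=0$, $m=\nu$, and with the argument $x$ there replaced by $2x$.

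Next I would verify that the hypotheses of Theorem~\ref{3:t2} hold in this specialization. We have $\Re k=0<\tfrac12$, so $k=0$, $m=\nu$ are admissible; the hypergeometric function $F$ appearing in Theorem~\ref{3:t2} becomes ${}_2F_1(\nu+\tfrac12,\nu+\tfrac12;1;z)$, so its largest negative zero $p(0,\nu)$ coincides with the quantity $p(\nu)$ defined before the corollary, and $p(0,\nu)=-\infty$ exactly when $p(\nu)=-\infty$. Since $2>0$ we have $\arg(2x)=\arg x$, so the admissibility condition on the argument in Theorem~\ref{3:t2}, namely $|\arg(2x)|<\pi$ if $p=-\infty$ and $|\arg(2x)|\le\theta$ with $\tan\theta=-1/(-p)^{1/2}$ otherwise, is precisely the hypothesis assumed in the corollary. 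Theorem~\ref{3:t2} then shows that $g(t):=t^{-1}e^{t\Re(2x)}\bigl|W_{0,\nu}(t\cdot 2x)\bigr|^2$ is strictly completely monotone on $(0,\infty)$.

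Finally, $f=\tfrac{\pi}{2|x|}\,g$ is a positive constant multiple of a strictly completely monotone function, so $(-1)^nf^{(n)}(t)=\tfrac{\pi}{2|x|}(-1)^n g^{(n)}(t)>0$ for all $t>0$ and all $n$, which is the claim. The only point that needs a moment's care is the identification of $p(0,\nu)$ with $p(\nu)$ together with the observation that rescaling the argument by the positive factor $2$ leaves the opening angle $\theta$ unchanged; past that, the argument is a routine specialization and I do not anticipate a genuine obstacle. Alternatively, one could argue directly from Corollary~\ref{3:cor0}: the power series ${}_2F_1(\nu+\tfrac12,\nu+\tfrac12;1;z)$ has nonnegative coefficients and stays positive on $[p(\nu),1)$, so under the stated angle restriction the argument $u(4\Re x+u)/|2x+u|^2$ stays in that interval and the integrand is nonnegative; hence $f$ is the Laplace transform of a nonnegative function, and Bernstein's theorem again gives complete monotonicity.
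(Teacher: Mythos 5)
Your proposal is correct and is essentially the paper's (implicit) argument: the paper states these Bessel-function results as corollaries obtained by specializing the Whittaker results via $K_\nu(x)=(\pi/(2x))^{1/2}W_{0,\nu}(2x)$ with $k=0$, $m=\nu$, and argument $2x$, exactly as you do. Your checks that $p(0,\nu)=p(\nu)$ and that the factor $2$ does not change $\arg x$ are the right points to verify, and the alternative route through Corollary~\ref{3:cor0} is just the same specialization carried out at the level of the integral representation.
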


\begin{cor}\label{3:c3}
Suppose that $-\frac12\le \nu\le \frac12$, and $x\in\C$, $|\arg x|<\pi$. Then the function \eqref{3:f2}
is (strictly) completely monotone on $(0,\infty)$.
\end{cor}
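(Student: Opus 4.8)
\section*{Proof proposal for Corollary~\ref{3:c3}}

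The plan is to obtain Corollary~\ref{3:c3} as a direct specialization of Theorem~\ref{3:t3} to the case $k=0$, $m=\nu$, using the identity $K_\nu(x)=(\pi/(2x))^{1/2}W_{0,\nu}(2x)$ recorded just before Corollary~\ref{3:cor0}.

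First I would substitute $y=2x$ and rewrite $f$ in terms of a Whittaker function. From the Bessel--Whittaker relation, $K_\nu(tx)=(\pi/(2tx))^{1/2}W_{0,\nu}(2tx)$, so that $|K_\nu(tx)|^2=\frac{\pi}{2t|x|}\,|W_{0,\nu}(ty)|^2$. Since $\Re y=2\Re x$ and $|y|=2|x|$, this yields
\[ f(t)=e^{2t\Re x}|K_\nu(tx)|^2=\frac{\pi}{|y|}\,t^{-1}e^{t\Re y}|W_{0,\nu}(ty)|^2 . \]

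Next I would verify the hypotheses of Theorem~\ref{3:t3} with $k=0$ and $m=\nu$: the condition $k<\tfrac12$ becomes $0<\tfrac12$; the condition $k-\tfrac12\le m\le \tfrac12-k$ becomes exactly $-\tfrac12\le\nu\le\tfrac12$, which is our assumption; and $|\arg y|=|\arg x|<\pi$. Theorem~\ref{3:t3} then guarantees that $g(t):=t^{-1}e^{t\Re y}|W_{0,\nu}(ty)|^2$ is strictly completely monotone on $(0,\infty)$.

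Finally, $f=(\pi/|y|)\,g$ is a positive constant multiple of $g$, and strict complete monotonicity is preserved under such scaling, so $f$ is strictly completely monotone on $(0,\infty)$, as claimed. I do not anticipate a genuine obstacle; the one place that warrants a moment's care is the bookkeeping of the factors $t^{-1}$ and $|\cdot|$ in passing from $W_{0,\nu}$ to $K_\nu$, and checking that at $k=0$ the strip $k-\tfrac12\le m\le \tfrac12-k$ collapses precisely to the interval $[-\tfrac12,\tfrac12]$ for $m=\nu$. (Alternatively, one could argue directly from Corollary~\ref{3:cor0} together with the positivity of ${}_2F_1(\nu+\tfrac12,\nu+\tfrac12;1;z)$ for $z<1$, but the route through Theorem~\ref{3:t3} is shorter.)
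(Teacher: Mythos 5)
Your proposal is correct and matches the paper's (implicit) derivation: Corollary~\ref{3:c3} is exactly the specialization of Theorem~\ref{3:t3} to $k=0$, $m=\nu$ via $K_\nu(tx)=(\pi/(2tx))^{1/2}W_{0,\nu}(2tx)$, and your bookkeeping of the prefactor $\pi/|y|$ and the collapse of $k-\tfrac12\le m\le\tfrac12-k$ to $-\tfrac12\le\nu\le\tfrac12$ is accurate. Nothing further is needed.
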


Erd\'elyi  \cite[page 875]{E}  mentioned that the hypergeometric function appearing in connection with Bessel functions can be expressed as a
Legendre function by means of
\[  {}_2F_1(a,a;1;1-z)=z^{-\frac12 a}P_{-a}\left(\frac{2}{z}-1\right) .\]
If $\nu\in\R$ and $|\arg x|\le \frac12\pi$ then the function
\[ t\mapsto |K_\nu(tx)|^2 \]
is also completely monotone on $(0,\infty)$ because the product of completely monotone functions is again completely monotone.

\vspace{3mm}
\noindent
{\bf Example:}
Consider $\nu=2$. By numerical calculation we find $p(2)\approx -0.4573617040$. The
function
\[ t\mapsto e^{2t\Re x} |K_2(tx)|^2,\quad t>0\]
is completely monotone provided that $|\arg x|\le \theta$,
where
\[ \theta \approx 2.165428404 .\]

Boris Belinskiy also asked questions involving the Whittaker function $M_{k,m}$. For example, one would like to estimate
the quotient
\[
 \frac{M_{k,m}(z)}{M_{k,m}(\tau z)}
\]
when $k$ is purely imaginary, $m$ is real and $|\arg z|<\frac12 \pi$. However, there does not seem to exist a formula for $M_{k,m}$ analogous to
\eqref{2:e1}. Therefore, different methods would have to be used for the Whittaker function $M_{k,m}$.


\begin{thebibliography}{9}
\bibitem{B}
H.~Buchholz.
\newblock {\em Die konfluente hypergeometrische {F}unktion mit besonderer
  {B}er\"ucksichtigung ihrer {A}nwendungen}.
\newblock Ergebnisse der angewandten Mathematik. Bd. 2. Springer-Verlag,
  Berlin, 1953.

\bibitem{E}
A. Erd\'elyi. Integral representations for products of Whittaker functions.
{\it The London, Edinburgh and Dublin philosphical magazine and journal of science 26, 871--877 (1938).}

\bibitem{NIST}
F.~W.~J. Olver, D.~W. Lozier, R.~F. Boisvert, and C.~W. Clark, editors.
\newblock {\em N{IST} handbook of mathematical functions}.
\newblock Cambridge University Press, Cambridge, 2010.

\bibitem{O}
F.~W.~J. Olver.
\newblock{\em Asymptotics and Special Functions}. Academic Press, New York 1974.

\bibitem{W}
D. Widder.
\newblock{\em The Laplace Transform.}
 Princeton University Press, Princeton, New Jersey, 1941.

\end{thebibliography}
\end{document}